
\documentclass[11pt]{article}
\textwidth=5in
\textheight=7.5in

\usepackage[english]{babel}
\usepackage{amsmath,amssymb,amsthm,url} 
\usepackage[mathscr]{eucal} 
\usepackage{dr-sym}  
\newtheorem{theorem}{Theorem}
\newtheorem{lemma}{Lemma}
\newtheorem{corollary}{Corollary}
\hyphenation{clique-width}
\hyphenation{rank-width}
\hyphenation{tree-width}

\title{Some probabilistic results \\ on width measures of graphs}
\author{ Jakub Mare{\v c}ek \\ \small{School of Computer Science, The University of Nottingham} \\ \small{Jubilee Campus, Nottingham NG8~1BB, UK} \\ \small{\url{http://cs.nott.ac.uk/~jxm/}}}
\date{\today}

\begin{document}
\maketitle

\begin{abstract}
Fixed parameter tractable (FPT) algorithms run in time 
$f(p(x))\poly(\abs{x})$, where $f$ is an arbitrary function 
of some parameter $p$ of the input $x$ and $\poly$ is some polynomial function.
Treewidth, branchwidth, cliquewidth, NLC-width, rankwidth, and booleanwidth are parameters often used in 
the design and analysis of such algorithms for problems on graphs.

We show asymptotically almost surely (aas), booleanwidth $\boolw(G)$ is $O(\rw(G) \log \rw(G))$, where $\rw$ is rankwidth.
More importantly, we show aas $\Omega(n)$ lower bounds on the treewidth, branchwidth, cliquewidth, NLC-width, and rankwidth of graphs drawn from a simple random model. 
This raises important questions about the generality of FPT algorithms using the corresponding decompositions.

\end{abstract}



\section{The Introduction}

Fixed parameter tractable (FPT)
algorithms run in time $f(p(x))\poly(\abs{x})$, where $f$ is an arbitrary function of some parameter 
$p$ of the input $x$ and $\poly$ is some polynomial function.
Notice that as long as it is safe to assume that $p$ is $O(1)$, the run time is polynomial in the length of the input, 
even with $f$ exponential or worse.
For problems on graphs, parameter $p$ is usually a measure of complexity of some tree decomposition of a graph,
which is referred to as the graph's width. 

There has been much progress in the development of FPT graph algorithms recently.  
The attention seems to have shifted from treewidth ($\tw$) \cite{PhDSeese,MR1312164} to newer width measures \cite{Survey}: branchwidth ($\bw$) \cite{MR1110468}, 
cliquewidth ($\cw$) \cite{MR1042649}, NLC-width ($\nlc$) \cite{MR1300250},
rankwidth ($\rw$) \cite{MR2232389}, and booleanwidth ($\boolw$) \cite{PreBooleanwidth}.
It is known \cite{MR2148860,MR2384022,PhDJohansson} graph $G$ has
\begin{align}
\nlc(G) \ge \cw(G) \ge \rw(G) 
\label{eq:nerovnost} \\
\tw(G) + 2 \ge \bw(G) + 1 \ge \rw(G) 
\label{eq:nerovnost2}
\end{align}
except for some trivial exceptions. There are graphs, whose treewidth is unbounded in cliquewidth \cite{MR2148860}, 
as well as graphs, for which cliquewidth is exponential in either rankwidth or booleanwidth \cite{PreBooleanwidth}.
Intriguingly, booleanwidth can be both more or exponentially less than rankwidth \cite{PreBooleanwidth}.
There are $O(n)$ FPT algorithms for obtaining treewidth and branchwidth decompositions \cite{MR1616221,MR1417901}.
For some (presently unknown) $f$, there exists an $f(k) O(n^3)$ algorithm for obtaining rankwidth-$k$ decompositions of a graph on $n$ vertices 
or certifying their non-existence \cite{MR2421076}, which also gives the best known approximation of cliquewidth and booleanwidth.  
Hence, the attraction.

To some extent, however, our understanding of fixed parameter tractability is limited by the very assumption that the parameter is constant.
In this paper, we attempt to use probabilistic methods to study the dependence of width measures on the number of vertices of a graph.
We show asymptotically almost surely, there are $\Omega(n)$ lower bounds on the treewidth, branchwidth, cliquewidth, NLC-width, and 
rankwidth of graphs drawn from a simple random model. 

\section{The Definitions}

We mention only the definitions of rankwidth and booleanwidth we use in the proofs.
For standard definitions of rankwidth and booleanwidth \cite{MR2232389,PreHJoin,PreBooleanwidth}, 
as well as for any other definitions, please follow the references.

We take a more general view of what is a width measure, 
suggested by Robertson and Seymour \cite{MR1110468} and quoted in verbatim from Bui-Xuan et al. \cite{PreBooleanwidth}:
Let $f$ be a cut function of a graph $G$, and $(T, \delta)$ a decomposition tree of $G$. 
For every edge $uv$ in $T$ , $\{X_u,X_v\}$ denotes the 2-partitions of $V$ induced by the 
leaf sets of the two subtrees we get by removing $uv$ from $T$. The $f$-width of $(T, \delta)$
is the maximum value of $f(X_u)$, taken over every edge $uv$ of $T$ . An optimal $f$-decomposition 
of $G$ is a decomposition tree of G having minimum $f$-width. The $f$-width of $G$ is the $f$-width 
of an optimal $f$-decomposition of G.

In this framework, it is easy to define rankwidth and booleanwidth.
In rankwidth, the function $f$ is the cut-rank function:
\begin{align}
f_{\rw} = \log_2 \left| {\left\{ {Y \subseteq B:\exists X \subseteq A,Y = \bigtriangleunion\limits_{x \in X} {N(x)} } \right\}} \right|,
\label{eq:cutrank}
\end{align}
where neighborhood $N(x)$ are vertices adjacent to $x$ and $\bigtriangleunion$ denotes the set difference.
This is the base-2 logarithm of the size of the row space over $GF(2)$-sums, which 
is the number of pairwise different vectors that are spanned by the rows of the $\abs{A} \times \abs{V \setminus A}$ 
submatrix of the adjacency matrix of $G$ over $GF(2)$, where $0 + 0 = 1 + 1 = 0$ and $0 + 1 = 1 + 0 = 1$.
The corresponding discontiguous definition of taking a submatrix will be used throughout the paper.
Boolean-width can then be defined similarly with
\begin{align}
f_{\boolw} = \log_2 \left| {\left\{ {Y \subseteq B:\exists X \subseteq A,Y = \cupunion\limits_{x \in X} {N(x)} } \right\}} \right|.
\label{eq:boolrank}
\end{align}
Informally, we take the logarithm of the number of distinct unions 
of the neighbourhoods of vertices.
This, without much surprise, is the base-2 logarithm of the size of the row space of a binary matrix with boolean-sums ($1 + 1 = 1$).

Let us now approach rankwidth via the rank of certain submatrices of random matrices over $GF(2)$.

\section{A Lemma}
 
Let us intially use a simple model $M(m,n)$ of random $m \times n, m \le n$ matrices over $GF(2)$, 
where 
each element of a matrix is chosen independently to be $0$ with probability $\frac{1}{2}$
 and $1$ with probability $\frac{1}{2}$.
More rigorously, this is a family of probability spaces over matrices over $GF(2)$. 

First, we state a theorem derived from Bl{\"o}mer, Karp, and Welzl \cite{MR1608234},
which may remind us of Shannon's switching game \cite{MR0073317},
and use it to derive a simple lemma.

\begin{theorem}[Bl{\"o}mer et al. \cite{MR1608234}]
\label{lem:bloemer}
The probability that an $m \times n, m \le n$ matrix $M_{m,n}$ drawn randomly from $M(m,n)$ has rank 
less than $m - d$ is $2^{-\Omega((n - m + d)(m + d))}$.
\end{theorem}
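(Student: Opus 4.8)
The plan is to pass to the left null space of $M_{m,n}$ and bound the probability that it is too large. The matrix has rank less than $m-d$ exactly when its left null space $\{v\in GF(2)^m : v^{\top}M_{m,n}=0\}$ has dimension at least $k:=d+1$, so it suffices to bound the probability that the corank is at least $k$.

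I would first treat a single putative kernel vector. For fixed nonzero $v\in GF(2)^m$, the $j$-th coordinate of $v^{\top}M_{m,n}$ is an XOR of a nonempty set of independent fair bits and is therefore itself a fair bit, and the $n$ coordinates are mutually independent because distinct columns of $M_{m,n}$ involve disjoint entries; hence $\Pr[v^{\top}M_{m,n}=0]=2^{-n}$. This extends at once to subspaces: for a fixed $k$-dimensional $W\subseteq GF(2)^m$ with basis $v^{(1)},\dots,v^{(k)}$, the map $x\mapsto(\langle v^{(1)},x\rangle,\dots,\langle v^{(k)},x\rangle)$ is onto $GF(2)^k$, so each column independently meets all $k$ constraints with probability $2^{-k}$, giving $\Pr[W\subseteq\ker]=2^{-kn}$.

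I would then take a union bound over candidate subspaces. Since the corank is at least $k$ if and only if some $k$-dimensional $W$ is contained in the kernel,
\begin{align}
\Pr[\operatorname{rank} M_{m,n}<m-d]\;\le\;\binom{m}{k}_{2}\,2^{-kn},
\label{eq:unionbound}
\end{align}
where $\binom{m}{k}_{2}$ is the Gaussian binomial counting $k$-dimensional subspaces of $GF(2)^m$. Feeding in the standard estimate $\binom{m}{k}_{2}=O(2^{k(m-k)})$ and setting $k=d+1$, the right-hand side of \eqref{eq:unionbound} becomes $O(2^{k(m-k)-kn})=2^{-\Omega((d+1)(n-m+d))}$, a bound of the product shape asserted in the statement.

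The main obstacle is sharpening the exponent rather than the shape of the argument. The transparent union bound above already exhibits the product of the corank with the excess $n-m+d$ of columns over the target rank, but its leading factor is the corank $d+1$; reconciling this with the quoted exponent, and collapsing the tail $\sum_{k\ge d+1}\binom{m}{k}_{2}2^{-kn}$ onto its dominant term, is where the finer counting behind \cite{MR1608234} is needed — most naturally an exact count of rank-$r$ matrices via $\prod_{i=0}^{r-1}(2^{n}-2^{i})$ weighted by Gaussian binomials, followed by a dominant-term analysis. I expect that accounting, and not the probabilistic core, to be where the real care is needed.
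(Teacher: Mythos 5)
First, a point of comparison: the paper contains no proof of this statement at all; it is imported verbatim as an external result of Bl{\"o}mer, Karp and Welzl, so the only question is whether your blind attempt establishes what is claimed. Your probabilistic core is correct: rank less than $m-d$ is equivalent to the left null space containing a $(d+1)$-dimensional subspace, a fixed $k$-dimensional subspace lies in the kernel with probability exactly $2^{-kn}$, and the union bound over the $\binom{m}{k}_2 = O(2^{k(m-k)})$ candidate subspaces gives $\Pr[\operatorname{rank} M_{m,n} < m-d] = O(2^{-(d+1)(n-m+d+1)})$.

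The genuine problem is the step you postpone to ``finer counting'': it cannot be carried out, because the statement as transcribed in the paper is false, and your weaker bound is in fact the truth. The exact count you propose to use, namely that the number of $m\times n$ matrices of rank $r$ over $GF(2)$ is $\binom{m}{r}_2\prod_{i=0}^{r-1}(2^n-2^i)$, yields $\Pr[\operatorname{rank}=r]=\Theta(2^{-(m-r)(n-r)})$, hence $\Pr[\operatorname{rank}<m-d]=\Theta(2^{-(d+1)(n-m+d+1)})$: your union bound is tight up to constants, and the factor $d+1$ cannot be improved to $m+d$. For a concrete refutation of the quoted form, take $m=n$ and $d=1$: the claimed bound is then $2^{-\Omega(n)}$, but the probability that a uniform random $n\times n$ matrix over $GF(2)$ has corank at least $2$ tends to $1-3\prod_{i\ge 1}(1-2^{-i})\approx 0.13$, a positive constant. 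So the theorem should be stated with exponent $\Omega((d+1)(n-m+d))$, i.e.\ with $d+1$ in place of $m+d$, which is exactly what you proved. Note that this correction is harmless downstream: in Lemma~\ref{lem:pomocne} the theorem is invoked with row number $n/3$, column number $2n/3$ and $d=n/6$, where $d+1$ and $m+d$ agree up to a factor of about $3$, and the corrected exponent is still $\Omega(n^2)$, which crushes the $O(3^{3n})$ count of submatrices.
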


\begin{lemma}
\label{lem:pomocne}
Asymptotically almost surely, the minimum rank of $\frac{n}{3} \times \frac{2n}{3}$ 
submatrices of $M_{n,n}$ drawn randomly from $M(n,n)$ is bounded from below by $\Omega(n)$.
\end{lemma}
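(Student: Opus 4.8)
The plan is to fix an arbitrary admissible submatrix, observe that it is itself a uniformly random matrix over $GF(2)$, bound the probability that it has small rank via Theorem~\ref{lem:bloemer}, and then take a union bound over all $2^{O(n)}$ such submatrices. The crucial quantitative feature is that Theorem~\ref{lem:bloemer} supplies a tail that decays like $2^{-\Omega(n^2)}$, which comfortably dominates the merely exponential number of submatrices.

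First I would fix a set $R$ of $n/3$ row indices and a set $C$ of $2n/3$ column indices, and let $S$ be the corresponding $\frac{n}{3}\times\frac{2n}{3}$ submatrix of $M_{n,n}$. Since the entries of $M_{n,n}$ are chosen independently and uniformly from $GF(2)$, the entries of $S$ are again independent uniform bits, so $S$ is distributed exactly as a matrix drawn from $M(\frac{n}{3},\frac{2n}{3})$. Hence Theorem~\ref{lem:bloemer} applies with its row count equal to $n/3$ and its column count equal to $2n/3$ (note $n/3\le 2n/3$, as required). Taking $d=0$ there, the probability that $S$ fails to have full row rank $n/3$ is $2^{-\Omega((2n/3-n/3)(n/3))}=2^{-\Omega(n^2)}$.

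Next I would count the submatrices and apply the union bound. The number of ways to choose $R$ and $C$ is $\binom{n}{n/3}\binom{n}{2n/3}$, which by the standard entropy estimate $\binom{n}{\alpha n}\le 2^{nH(\alpha)}$ is at most $2^{O(n)}$. Therefore the probability that \emph{some} $\frac{n}{3}\times\frac{2n}{3}$ submatrix has rank below $n/3$ is at most $2^{O(n)}\cdot 2^{-\Omega(n^2)}=2^{O(n)-\Omega(n^2)}$, which tends to $0$ as $n\to\infty$. Consequently, aas every such submatrix has full row rank $n/3$, and in particular the minimum rank is $n/3=\Omega(n)$. (If one prefers strictly positive slack, replacing $d=0$ by any $d$ with $n/3-d=\varepsilon n$ for a fixed $\varepsilon\in(0,1/3)$ keeps both factors in the exponent $\Omega(n)$ and hence preserves the $2^{-\Omega(n^2)}$ decay.)

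I expect the only genuine obstacle to be bookkeeping: one must verify that both factors $(n-m+d)$ and $(m+d)$ in Theorem~\ref{lem:bloemer} are truly $\Omega(n)$ for our aspect ratio, so that their product is $\Omega(n^2)$ rather than merely $\Omega(n)$; this is precisely what lets the quadratic decay swallow the $2^{O(n)}$ coming from the union bound. Here $n-m=2n/3-n/3=n/3$ and $m=n/3$ are both linear in $n$, so the margin is ample and no delicate optimization over $d$ is needed.
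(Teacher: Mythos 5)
Your outline is the same as the paper's (bound the rank of one fixed submatrix via Theorem~\ref{lem:bloemer}, then union bound over the $2^{O(n)}$ choices of rows and columns), but you invoke the theorem at $d=0$, and that is where the argument breaks. Your key quantitative claim --- that a fixed $\frac{n}{3}\times\frac{2n}{3}$ uniform $GF(2)$ matrix fails to have full row rank with probability $2^{-\Omega(n^2)}$ --- is false. The failure probability is at least $2^{-2n/3}$, since already the event that the first row of the submatrix is the zero vector forces rank $<\frac{n}{3}$; a union bound over nonzero left-kernel vectors gives the matching upper bound $2^{n/3-2n/3}=2^{-n/3}$, so the truth is $2^{-\Theta(n)}$. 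In other words, Theorem~\ref{lem:bloemer} read literally at $d=0$ cannot be a correct rendering of the Bl{\"o}mer--Karp--Welzl bound: in the correct statement the exponent is governed by the allowed corank, roughly $d(n-m+d)$, and it degenerates to $\Theta(n)$ when $d=O(1)$. With the true per-submatrix probability $2^{-\Theta(n)}$, your union bound over $\binom{n}{n/3}\binom{n}{2n/3}\approx 2^{1.84n}$ submatrices does not converge, so the $d=0$ argument collapses.

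This is not a repairable bookkeeping issue: the conclusion of your main argument is genuinely false. Asymptotically almost surely there \emph{do} exist $\frac{n}{3}\times\frac{2n}{3}$ submatrices of $M_{n,n}$ with rank $<\frac{n}{3}$. Indeed, consider the $\sum_{k\le n/3}\binom{n}{k}\approx 2^{H(1/3)n}\approx 2^{0.918n}$ nonzero vectors $x\in GF(2)^n$ of support size at most $\frac{n}{3}$, where $H$ is the binary entropy function. For each such $x$, the vector $x^{T}M_{n,n}$ is uniform, hence has at least $\frac{2n}{3}$ zero coordinates with probability about $2^{-(1-H(1/3))n}\approx 2^{-0.082n}$; distinct nonzero $x$'s yield pairwise independent events, so a second-moment argument shows some such $x$ exists aas, and choosing $\frac{n}{3}$ rows containing $\operatorname{supp}(x)$ together with $\frac{2n}{3}$ columns on which $x^{T}M_{n,n}$ vanishes produces a rank-deficient submatrix. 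This is exactly why the paper does not aim at full rank: it sets the rank threshold at $\frac{n}{6}$, i.e.\ $d=\frac{n}{6}=\Theta(n)$, a regime where the per-submatrix probability really is $2^{-\Omega(n^2)}$ (under the correct form of the bound as well), and it concludes only that the minimum rank exceeds $\frac{n}{6}$, which still gives $\Omega(n)$. Your parenthetical remark --- taking $\frac{n}{3}-d=\varepsilon n$ for fixed $\varepsilon\in(0,\frac{1}{3})$ --- is precisely this fix; it must be the main argument rather than an optional refinement, and once you make it so, your proof coincides with the paper's.
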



\begin{proof}
Let us denote the minimum rank among $\frac{n}{3} \times \frac{2n}{3}$ submatrices $S_{\frac{n}{3},\frac{2n}{3}}$ in an $n \times n$ matrix 
$M_{n,n}$ over GF(2) drawn from $M(n,n)$ by $\mu$ and let us study the probability of $\mu$ being greater than an arbitrary 
$\frac{n}{6}$. 
Using Boole's inequality (\ref{step:two}), $O(3^{3n})$ bound \cite{MR1543694,MR1876263} on the number of submatrices 
of interest in an $n \times n$ matrix given by binomial coefficient $\binom{n}{\frac{n}{3}}$ (\ref{step:three}),
and Theorem~\ref{lem:bloemer} (\ref{step:four}):
\begin{align}
\prob(\mu \le \frac{n}{6}) & = \prob( \rank(N) \le \frac{n}{6} \quad \forall S_{\frac{n}{3},\frac{2n}{3}} \in M_{n,n} \label{step:one} \\ 
& \le \sum_{S_{\frac{n}{3},\frac{2n}{3}} \in M_{n,n}} \prob(\rank(S_{\frac{n}{3},\frac{2n}{3}}) \le {\frac{n}{6}}) \label{step:two} \\
& \le {\binom{n}{{\frac{n}{3}}}} \; \prob(\rank(S_{\frac{n}{3},\frac{2n}{3}}) \le {\frac{n}{6}}) \label{step:three} \\
& \approx O({3^{3n}}{2^{-n^2}}) \label{step:four}
\end{align}
Clearly, 
\begin{align}
\lim_{n \to \infty}{{3^{3n}2^{-n^2}}} = 0. 
\end{align}
\end{proof}

\section{The Main Result}

Now, we can state the main result using a simple model $G(n,\half)$ of random graphs of $n$ vertices, 
where each edge appears independently with probability $\frac{1}{2}$.

\begin{theorem}
Asymptotically almost surely, the rankwidth of a graph drawn randomly from $G(n,\half)$ is bounded from below by $\Omega(n)$.
\end{theorem}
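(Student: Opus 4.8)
The plan is to combine the variational definition of rankwidth with the matrix lemma just proved. Recall that $\rw(G)$ is the minimum, over all decomposition trees $(T,\delta)$, of the maximum over edges $uv$ of $T$ of the cut-rank $f_{\rw}(X_u)$, and that by (\ref{eq:cutrank}) this cut-rank is exactly the $GF(2)$-rank of the $\abs{X_u}\times\abs{X_v}$ submatrix of the adjacency matrix that records the edges between $X_u$ and $X_v$. To obtain an $\Omega(n)$ lower bound holding asymptotically almost surely it therefore suffices to exhibit a single property of the adjacency matrix that holds aas and that forces \emph{every} decomposition tree to contain at least one edge of large cut-rank. The first ingredient I would use is a purely combinatorial fact about the tree: since $T$ is subcubic with $n$ leaves, there is always an edge whose removal splits the leaf set, and hence $V(G)$, into two parts each of size between $\frac{n}{3}$ and $\frac{2n}{3}$ (orient each edge towards the heavier side and follow the orientation; it cannot cycle, so it terminates at such a balanced edge).

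First I would fix an arbitrary decomposition tree and pass to its balanced edge $uv$, so that $\abs{X_u},\abs{X_v}\ge \frac{n}{3}$. The associated cut is an off-diagonal block of the adjacency matrix: its entries are the independent indicators, each equal to $1$ with probability $\half$, of the edges between the \emph{disjoint} vertex sets $X_u$ and $X_v$, so this block is distributed exactly as a random $GF(2)$ matrix from the model $M(\abs{X_u},\abs{X_v})$, and the global symmetry of the adjacency matrix imposes no constraint coupling its entries. Because both sides have at least $\frac{n}{3}$ vertices, the block contains a $\frac{n}{3}\times\frac{n}{3}$ submatrix, and since the rank of a matrix is at least the rank of any of its submatrices, $f_{\rw}(X_u)$ dominates the rank of that embedded submatrix. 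Running the Boole's-inequality-and-Theorem-\ref{lem:bloemer} argument of Lemma~\ref{lem:pomocne} for $\frac{n}{3}\times\frac{n}{3}$ submatrices — the per-submatrix failure probability is still $2^{-\Omega(n^2)}$ while the number of such submatrices is only $\binom{n}{n/3}^2=2^{O(n)}$ — shows that aas the minimum rank over all $\frac{n}{3}\times\frac{n}{3}$ (and a fortiori over all $\frac{n}{3}\times\frac{2n}{3}$) submatrices of the adjacency matrix is $\Omega(n)$.

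Conditioning on this single event, every balanced-edge block has rank $\Omega(n)$ at once, so every decomposition tree has an edge of cut-rank $\Omega(n)$, whence $\rw(G)=\Omega(n)$ asymptotically almost surely. The hard part, and the point on which the whole argument turns, is the order of quantifiers: there are super-exponentially many decomposition trees, far too many to union-bound over directly. The resolution is that the random object is the matrix, not the tree, so the union bound is taken only over the $2^{O(n)}$ candidate submatrices; the one high-probability event ``all balanced submatrices have large rank'' then dominates every tree simultaneously. The remaining care is to confirm that the off-diagonal block between two disjoint vertex classes genuinely has fully independent entries, so that Theorem~\ref{lem:bloemer} applies unchanged despite the symmetry of the adjacency matrix, and to check that the constants survive — with $\frac{n}{3}$ rows and columns and a target rank of, say, $\frac{n}{6}$, the exponent in Theorem~\ref{lem:bloemer} stays quadratic in $n$ and comfortably beats the exponential count of submatrices.
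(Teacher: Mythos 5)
Your proposal is correct, and its skeleton is the same as the paper's: every decomposition tree contains a balanced edge whose cut is a block of the adjacency matrix with both sides of size at least $\frac{n}{3}$, and a union bound over the $2^{O(n)}$ candidate blocks combined with Theorem~\ref{lem:bloemer} (exactly the computation of Lemma~\ref{lem:pomocne}) makes all of them have rank $\Omega(n)$ simultaneously, asymptotically almost surely. You differ in two places, both to your credit. First, where the paper invokes the Robertson--Seymour minimax theorem in Oum's rankwidth translation to obtain the balanced edge, you prove it directly by the orient-toward-the-heavier-side argument on the subcubic tree; the fact is purely combinatorial and needs no tangle machinery, so this removes the heaviest citation from the proof. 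Second, you explicitly resolve the point that the adjacency matrix of a graph from $G(n,\half)$ is symmetric and therefore is \emph{not} drawn from $M(n,n)$: a block indexed by two \emph{disjoint} vertex classes has fully independent entries, so Theorem~\ref{lem:bloemer} applies to it unchanged. The paper dispatches this with only the brief remark that the submatrix ``is not (necessarily) symmetric.'' One small tightening: your intermediate claim about ``the minimum rank over all $\frac{n}{3}\times\frac{n}{3}$ submatrices'' should be restricted to submatrices with disjoint row and column index sets, since for overlapping index sets the entries are not independent and Theorem~\ref{lem:bloemer} does not apply; this is harmless, because only disjoint pairs ever arise from cuts, which is precisely the case your proof uses.
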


\begin{proof}
The minimax theorem of Robertson and Seymour \cite{MR1110468} linking branchwidth and tangles, 
  translated to rankwidth by Oum \cite{MR2384022},
  implies there exists an edge in any decomposition tree, which corresponds to a partition $(V_1, V_2)$ 
  of $n$ vertices of $G$, such that $\frac{n}{2} \ge \abs{V_1} \ge \frac{n}{3}$ and $\frac{n}{2} \le \abs{V_2} \le \frac{2n}{3}$. 
But then the rank of the $\frac{n}{3} \times \frac{2n}{3}$ submatrix of the minimum rank in the adjacency matrix of graph $G$, 
  given by Lemma~\ref{lem:pomocne}, is a lower bound on the value of the rankwidth of $G$.
Notice we need not consider skew-symmetric matrices in Lemma~\ref{lem:pomocne}, as the sub-matrix is not (necessarily) symmetric. 
\end{proof}

Given the trivial upper bound of $n$ on rankwidth of a graph on $n$ vertices, 
it is easy to see this lower bound is tight:

\begin{corollary}
Asymptotically almost surely, the rankwidth of a graph drawn randomly from $G(n,\half)$ is $\Theta(n)$.
\end{corollary}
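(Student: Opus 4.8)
The plan is to obtain $\Theta(n)$ by sandwiching the rankwidth between matching lower and upper bounds, both of which are already essentially in hand. The lower bound is exactly the content of the theorem just proved: asymptotically almost surely $\rw(G) = \Omega(n)$ for $G$ drawn from $G(n,\half)$. Since $\rw(G) = \Theta(n)$ means precisely that $\rw(G)$ is simultaneously $\Omega(n)$ and $O(n)$, all that remains is to supply the upper bound $\rw(G) = O(n)$, and the main point of care is to argue that this holds \emph{deterministically} for every graph on $n$ vertices, so that it can be combined with the aas lower bound at no probabilistic cost.

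For the upper bound I would invoke the cut-rank description of $f_{\rw}$ directly. For any bipartition $(A,B)$ of the vertex set, $f_{\rw}$ equals the $GF(2)$-rank of the $\abs{A} \times \abs{B}$ submatrix of the adjacency matrix, since it is the base-$2$ logarithm of the number of distinct vectors in the row space of that submatrix. This rank is at most the smaller of the two dimensions, hence at most $\min(\abs{A},\abs{B}) \le n/2$. The bound is uniform over all partitions, so along every edge of every decomposition tree the cut value is at most $n/2$; consequently the $f$-width of any fixed decomposition tree is at most $n/2$, and the rankwidth, being the minimum of such widths, satisfies $\rw(G) \le n/2$ for every graph $G$. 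This is the trivial upper bound alluded to immediately after the theorem.

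Combining the two estimates finishes the argument: on the asymptotically almost sure event guaranteed by the theorem we have $\Omega(n) \le \rw(G) \le n/2$, so $\rw(G) = \Theta(n)$ on that event, and therefore $\rw(G) = \Theta(n)$ asymptotically almost surely. I do not anticipate a genuine obstacle here, as the only nontrivial direction — the linear lower bound — has already been established in the theorem. The sole subtlety worth stating explicitly is that the upper bound is a worst-case inequality valid for all graphs, so intersecting it with the aas lower-bound event does not diminish the probability, and the two bounds share the same linear order needed to conclude $\Theta(n)$.
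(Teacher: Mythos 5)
Your proposal is correct and matches the paper's (implicit) argument exactly: the paper derives the corollary by combining the aas lower bound $\Omega(n)$ from the preceding theorem with the trivial deterministic upper bound on rankwidth (the paper cites $n$; your sharper $n/2$ via $\mathrm{rank} \le \min(\abs{A},\abs{B})$ is the same idea). Your explicit remark that the worst-case upper bound intersects the aas event at no probabilistic cost is a correct and welcome clarification of what the paper leaves unstated.
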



Finally, using the inequalities between the values of the parameters (\ref{eq:nerovnost}--\ref{eq:nerovnost2}), we can state the following: 

\begin{corollary}
Asymptotically almost surely, the treewidth of a graph drawn randomly from $G(n,\half)$ is $\Theta(n)$.
\end{corollary}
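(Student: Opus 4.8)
The plan is to obtain the statement as an immediate corollary of the preceding result that $\rw(G) = \Theta(n)$ asymptotically almost surely, combined with the chain of inequalities (\ref{eq:nerovnost2}). The strategy is a sandwich argument: I would establish an $\Omega(n)$ lower bound on treewidth by transferring the rankwidth lower bound through (\ref{eq:nerovnost2}), and an $O(n)$ upper bound from the trivial fact that no graph on $n$ vertices can have treewidth exceeding $n-1$. Once both bounds hold on the same almost-sure event, the conclusion $\tw(G) = \Theta(n)$ follows.

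For the lower bound, I would invoke (\ref{eq:nerovnost2}), which gives $\tw(G) + 2 \ge \bw(G) + 1 \ge \rw(G)$ and hence $\tw(G) \ge \rw(G) - 2$ for every graph $G$ outside the trivial exceptions. Since the preceding rankwidth corollary guarantees $\rw(G) = \Omega(n)$ asymptotically almost surely for $G$ drawn from $G(n,\half)$, the additive constant $-2$ is absorbed into the asymptotics, so that $\tw(G) \ge \rw(G) - 2 = \Omega(n)$ on the same almost-sure event.

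For the upper bound, I would simply note that $\tw(G) \le n - 1 = O(n)$ holds deterministically for any graph on $n$ vertices, so no probabilistic input is required on this side. Intersecting this with the almost-sure event carrying the lower bound yields $\Omega(n) \le \tw(G) \le O(n)$, that is $\tw(G) = \Theta(n)$, as claimed.

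The argument carries no real obstacle: all of the probabilistic content has already been discharged in Lemma~\ref{lem:pomocne} and the rankwidth corollary, and here only the deterministic inequality (\ref{eq:nerovnost2}) together with a trivial upper bound are needed. The only point demanding mild care is to confirm that $G(n,\half)$ almost surely avoids the trivial exceptions attached to (\ref{eq:nerovnost2}) — these concern edgeless or otherwise degenerate graphs, which arise with vanishing probability — so that the inequality is genuinely applicable on the almost-sure event.
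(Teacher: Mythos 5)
Your proposal is correct and follows exactly the route the paper intends: the lower bound comes from transferring the a.a.s.\ rankwidth bound through the chain $\tw(G) + 2 \ge \bw(G) + 1 \ge \rw(G)$ of (\ref{eq:nerovnost2}), and the upper bound is the trivial $\tw(G) \le n-1$. The paper states this corollary with no more justification than ``using the inequalities between the values of the parameters (\ref{eq:nerovnost}--\ref{eq:nerovnost2})'', so your write-up is in fact a more careful version of the same argument.
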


\begin{corollary}
\label{cor:bw}
Asymptotically almost surely, the branchwidth of a graph drawn randomly from $G(n,\half)$ is $\Theta(n)$.
\end{corollary}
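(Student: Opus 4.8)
The plan is to sandwich the branchwidth between two quantities of order $n$ using the chain of inequalities in (\ref{eq:nerovnost2}), so that essentially all the probabilistic work is inherited from the main theorem and Lemma~\ref{lem:pomocne}. For the lower bound I would read off from $\bw(G) + 1 \ge \rw(G)$ that $\bw(G) \ge \rw(G) - 1$. Since the main theorem establishes $\rw(G) = \Omega(n)$ asymptotically almost surely for $G$ drawn from $G(n,\half)$, subtracting the additive constant $1$ does not change the order of growth, and hence $\bw(G) = \Omega(n)$ aas on the same event.

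For the upper bound I would use the other half of the same chain, $\tw(G) + 2 \ge \bw(G) + 1$, that is $\bw(G) \le \tw(G) + 1$. The preceding corollary already gives $\tw(G) = \Theta(n)$ aas, in particular $\tw(G) = O(n)$, whence $\bw(G) = O(n)$ aas. One may also bypass treewidth entirely and invoke the trivial observation that the branchwidth of any graph on $n$ vertices is at most $n$; this bound holds deterministically for every graph, hence certainly aas, and already supplies the $O(n)$ ceiling. Combining the two bounds yields $\bw(G) = \Theta(n)$ aas, which is the claim of Corollary~\ref{cor:bw}.

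I expect no genuine obstacle here, as the substance resides in the main theorem; the only point deserving a line of care is the bookkeeping of the almost-sure events. Both the lower and the upper bound each fail only on an event whose probability tends to $0$, and by Boole's inequality (as used already in Lemma~\ref{lem:pomocne}) the union of two such vanishing events still vanishes, so their conjunction, and with it $\bw(G) = \Theta(n)$, holds asymptotically almost surely.
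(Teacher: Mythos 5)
Your proposal is correct and matches the paper's own (implicit) argument: the paper derives this corollary exactly by combining the aas lower bound $\rw(G)=\Omega(n)$ from the main theorem with the inequality $\bw(G)+1\ge\rw(G)$ of (\ref{eq:nerovnost2}), together with the trivial $O(n)$ upper bound on branchwidth. Your extra care with the union of the two vanishing failure events is sound and consistent with how the paper handles its probabilistic bookkeeping.
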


\begin{corollary}
Asymptotically almost surely, the cliquewidth of a graph drawn randomly from $G(n,\half)$ is $\Theta(n)$.
\end{corollary}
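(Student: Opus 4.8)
The plan is to sandwich the clique-width between a linear lower bound inherited from rank-width and the trivial linear upper bound, so that almost all of the work is done by the Main Result together with the parameter inequality (\ref{eq:nerovnost}). For the lower bound I would read off from (\ref{eq:nerovnost}) the relation $\cw(G) \ge \rw(G)$, and then invoke the Main Result, which states that asymptotically almost surely $\rw(G) = \Omega(n)$ for $G$ drawn from $G(n,\half)$. Since that event holds aas and on it we have $\cw(G) \ge \rw(G) = \Omega(n)$, I conclude that aas $\cw(G) = \Omega(n)$ as well.

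For the matching upper bound I would use the deterministic fact that every graph on $n$ vertices has clique-width at most $n$: giving each of the $n$ vertices a distinct label and building $G$ up through the relevant relabelling and edge-insertion operations realises any graph with $n$ labels. Hence $\cw(G) \le n = O(n)$ for \emph{every} graph on $n$ vertices, and in particular aas.

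Combining the two bounds, aas we have $\Omega(n) \le \cw(G) \le n$, whence $\cw(G) = \Theta(n)$. The only point worth a word is that the lower-bound and upper-bound events must hold simultaneously aas; because the upper bound is deterministic this is immediate, and no fresh union-bound argument beyond the one already carried out in Lemma~\ref{lem:pomocne} is required. I do not expect any genuine obstacle here: the entire probabilistic content has already been absorbed into the Main Result, and this corollary is a purely structural consequence of the inequality $\cw(G) \ge \rw(G)$ paired with the trivial upper bound.
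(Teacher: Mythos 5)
Your proposal is correct and matches the paper's own (implicit) argument: the paper derives this corollary exactly by combining the inequality $\cw(G) \ge \rw(G)$ from (\ref{eq:nerovnost}) with the Main Result's aas lower bound $\rw(G) = \Omega(n)$, together with the trivial upper bound $\cw(G) \le n$. Your observation that the deterministic upper bound makes the simultaneous-event issue vacuous is a nice touch, but no further work is needed.
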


\begin{corollary}
Asymptotically almost surely, the NLC-width of a graph drawn randomly from $G(n,\half)$ is $\Theta(n)$.
\end{corollary}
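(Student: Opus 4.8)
The plan is to obtain this corollary directly from the theorem on rankwidth together with the inequality chain already recorded in~(\ref{eq:nerovnost}), so that almost all of the work has in fact been done upstream. First I would invoke the main theorem, which gives that a graph $G$ drawn from $G(n,\half)$ satisfies $\rw(G) = \Omega(n)$ asymptotically almost surely. The inequality $\nlc(G) \ge \cw(G) \ge \rw(G)$, valid for every graph bar the finitely many trivial exceptions noted after~(\ref{eq:nerovnost2}), then transfers this lower bound up the chain vertex-by-vertex in the sample space: for each fixed constant $c$ the event $\{G : \rw(G) \ge cn\}$ is contained in the event $\{G : \nlc(G) \ge cn\}$, so if the probability of the former tends to $1$ then so does the probability of the latter. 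Hence $\nlc(G) = \Omega(n)$ asymptotically almost surely.

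For the matching upper bound I would appeal to the trivial fact that any graph on $n$ vertices has NLC-width at most $n$: assigning each vertex its own label and realising the graph by the NLC union-and-relabel operations uses at most $n$ labels, whence $\nlc(G) \le n = O(n)$ holds deterministically, not merely asymptotically almost surely. Combining the two bounds gives $\nlc(G) = \Theta(n)$ asymptotically almost surely, which is the assertion.

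The argument is essentially bookkeeping, so I do not expect a genuine obstacle; the one point that repays a moment's care is the direction of~(\ref{eq:nerovnost}). Since NLC-width \emph{dominates} rankwidth, the inequality supplies the lower bound for free but runs the wrong way for the upper bound, so the matching $O(n)$ estimate cannot be borrowed from cliquewidth and must instead come from the independent, definition-level observation that $\nlc(G) \le n$. As this $n$ bound is immediate from the definition of the NLC operations, no real difficulty arises.
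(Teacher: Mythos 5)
Your proposal matches the paper's (implicit) argument exactly: the lower bound comes from the main theorem on rankwidth transferred through the chain in~(\ref{eq:nerovnost}), and the upper bound is the trivial $\nlc(G) \le n$. Your added remark about the direction of the inequality is a sensible sanity check, but there is no substantive difference from the paper's route.
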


It should be noted that Corollary~\ref{cor:bw} seems to be the first probabilistic result on
branchwidth of random graphs. 

\section{Yet Another Bound}

Independently, but still using probabilistic arguments, we can also show:

\begin{theorem}
\label{thm:relationship}
Asymptotically almost surely, booleanwidth $\boolw(G)$ of a graph $G$ is $O(\rw(G) \log \rw(G))$, where $\rw(G)$ is the rankwidth of $G$.
\end{theorem}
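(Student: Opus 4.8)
The plan is to establish the bound cut-by-cut and then lift it to the whole graph through the cut-function framework of Section~2. Recall that for the biadjacency submatrix of a bipartition $(A,B)$, the value $f_{\boolw}(A)$ is the base-2 logarithm of the number of distinct unions of rows, while $f_{\rw}(A)$ is exactly the GF(2)-rank $r$ of that submatrix. Two trivial facts anchor the argument: the number of distinct unions is at most $2^{\min(\abs{A},\abs{B})}$, so $f_{\boolw}(A)\le\min(\abs{A},\abs{B})$; and $r=f_{\rw}(A)\le\min(\abs{A},\abs{B})$. Thus a pointwise comparison of $f_{\boolw}$ and $f_{\rw}$ reduces to showing that, aas and uniformly over all bipartitions, the rank is within a constant factor of the smaller side, i.e.\ $\min(\abs{A},\abs{B})=O(r)$.

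The central probabilistic step is therefore to prove that aas every bipartition $(A,B)$ of $G(n,\half)$ has $\rank=\Omega(\min(\abs{A},\abs{B}))$. For a cut with small side $\abs{A}=k\le n/2$ this says the $k\times(n-k)$ random submatrix has nearly full rank $k$; I would bound the failure probability with Theorem~\ref{lem:bloemer} and take a union bound over the $\binom{n}{k}\le 2^{n}$ choices of $A$, exactly as in Lemma~\ref{lem:pomocne} but now ranging over all $k$. For $k=\alpha n$ with $\alpha$ bounded away from $1/2$ the tail $2^{-\Omega((n-2k)k)}$ beats $2^{n}$ with room to spare; the delicate regime is $k$ close to $n/2$, where it suffices to ask only for rank $\ge k-d$ for a large enough constant $d$, which the same tail still supplies.

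Granting the uniform rank bound, the pointwise inequality follows: aas, for every cut, $f_{\boolw}(A)\le\min(\abs{A},\abs{B})\le C\,r = C\,f_{\rw}(A)\le C\,f_{\rw}(A)\log f_{\rw}(A)$, the logarithmic factor being slack that also absorbs the degenerate cuts of rank $0$ or $1$. To finish, I would apply this at the edges of an optimal rankwidth decomposition tree $T^{*}$, on which $\max_{e}f_{\rw}(e)=\rw(G)$. Since $x\log x$ is nondecreasing, $\boolw(G)\le\max_{e}f_{\boolw}(e)\le C\,(\max_{e}f_{\rw}(e))\log(\max_{e}f_{\rw}(e))=O(\rw(G)\log\rw(G))$, as required.

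I expect the uniform rank lower bound over all $\binom{n}{k}$ cuts to be the main obstacle: the union bound is comfortable for balanced and for very skewed cuts, but must be handled with care when $\abs{A}$ is within $o(n)$ of $n/2$, where Theorem~\ref{lem:bloemer} gives its weakest tail and one can only guarantee rank $k-O(1)$ rather than exactly $k$. An alternative that sidesteps the all-cuts rank estimate is to bound the number of distinct unions of random rows directly---showing every union is already the union of $O(\log n)$ rows, so $f_{\boolw}=O(\log^{2}n)$---and combine it with $\rw(G)=\Omega(n)$ from the main theorem; this yields the conclusion with enormous slack but is no longer independent of the earlier result.
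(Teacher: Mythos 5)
Your proposal is correct, but it takes a genuinely different route from the paper. The paper argues structurally rather than cut-probabilistically: it quotes the bound of Bui-Xuan et al.\ that $2^{\boolw(G)}$ is at most the number of subspaces $GF(2)$-spanned by the rows of the adjacency matrix, passes from counting subspaces to counting set partitions via Goldman--Rota, and finishes with the $O(r\log r)$ asymptotics for the logarithm of the number of partitions of an $r$-element set. You instead work cut by cut inside the random model: extending the union bound of Lemma~\ref{lem:pomocne} to every cut size $k\le n/2$ via Theorem~\ref{lem:bloemer}, you get that aas every bipartition satisfies $f_{\rw}=\Omega(\min(\abs{A},\abs{B}))$, while trivially $f_{\boolw}\le\min(\abs{A},\abs{B})$; evaluating both cut functions on an optimal rank decomposition tree then gives the claim. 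The obstacle you flag near $k=n/2$ is resolved exactly as you propose: asking only for rank $\ge k-d$ with $d$ a large constant makes the tail $2^{-\Omega(d(k+d))}=2^{-\Omega(dn)}$, which beats the $2^n$ cuts in the union bound. Two remarks on what each approach buys. First, your argument actually proves the stronger statement $\boolw(G)=O(\rw(G))$ aas; the logarithmic factor is pure slack, needed only because $\rw(G)\ge 2$ aas lets you multiply by $\log\rw(G)\ge 1$. Second, the paper's route is in spirit a per-graph (model-independent) inequality, but its counting step is fragile: the number of subspaces of an $r$-dimensional $GF(2)$-space is $2^{\Theta(r^2)}$, the Goldman--Rota link to set partitions being a $q$-analogy rather than an equality of counts, so read literally that route yields only $\boolw=O(\rw^2)$; your route is confined to $G(n,\half)$ but rests entirely on tools the paper has already established. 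Your alternative closing idea --- showing every union of neighborhoods is a union of $O(\log n)$ of them, giving polylogarithmic booleanwidth --- is essentially the result the paper attributes to Telle in the conclusions.
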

\begin{proof}
Bui-Xuan et al. \cite{PreHJoin,PreBooleanwidth} have shown $2^{\boolw(G(A))}$ is bounded from above by the number of subspaces 
$GF(2)$-spanned by the rows (resp. columns) of $A$,
where $G(A)$ is the graph given by the adjacency matrix $A$.
Goldman and Rota \cite{MR0252232} have shown the the number of subspaces of a vector space
corresponds to the number of partitions of a set.
But when we look at the number $c$ of partitions of a set of size $n$,
with probability $1 - o(e^{-n})$ \cite{MR671583}:
\begin{align}
\log c \le n(\log n - \log \log (n - 1) + O(1)).
\end{align}
\end{proof}


\section{The Conclusions}

In this paper, we have used probabilistic methods to study modern width measures of random graphs. 
We are aware of only a few results in this direction.
Prior to the unofficial publication of this draft, 
Bodlaender and Kloks \cite{MR1312164} studied treewidth of random graphs and
Johansson \cite{MR1676494} studied NLC-width and cliquewidth of random graphs.
Independently, Gao \cite{Gao2003} studied treewidth of random NK landscapes.
Since the unofficial publication of this draft,
Lee and Lee \cite{PreLeeLee} have provided very elegant proofs of the our results 
and Telle \cite{PreBooleanwidthExpectation} has established the polylogarithmic booleanwidth of random graphs. 
Our results also complement the theorem of Boliac and Lozin \cite{MR2079015}, which implies 
 that for each $k > 1$, the number of graphs having $n$ vertices and clique-width at most $k$ is only $2^{\Theta(n \log n)}$. 

The results suggest the limits of generality of algorithms designed and analysed using five well-known width measures of graphs,
  although there clearly are exponentially large classes of graphs, for which they are very appropriate.
If, however, the runtime is $f(k)\poly(\abs{x})$, where $f$ is exponential or worse 
and there is a $\Omega(n)$ lower bounded to go with $k$, we have not gained much
by making the analysis more detailed.

An important goal for further research is the characterisation of graphs with the
   expected value of some width measures logarithmic in the
   number of vertices, so as to provide some guidance, 
   where can one apply graph decompositions and fixed parameter tractable algorithms successfully.
Could it be that sparse constraint matrices of large classes of integer programs have branchwidth and rankwidth bounded by $O(\log n)$,
 and hence \cite{ACM1419515} are solvable in polynomial time, for instance?
In random models parametrised with density, it seems interesting to study 
  the behaviour of the expected value of width measures of ``hard'' instances. 
Could there be a relationship with high width measures?



\paragraph{Acknowledgments}
The author is indebted to Noga Alon \cite{MR2437651,NogaPersonal}, 
  whose beautiful proof inspired Lemma~\ref{lem:pomocne} in this paper,
as well as to Chris Wuthrich, Bjarki Holm, and Sang-il Oum, 
  who have kindly provided comments on early drafts.   
Thanks are also due to Petr Hlin{\v e}n{\' y} for his unwavering patience
and unrelenting intellectual stimulation. 

\bibliographystyle{abbrv}
\bibliography{width-prob}

\end{document}